\newtheorem{theorem}{Theorem}
\newtheorem{lemma}[theorem]{Lemma}
\begin{document}

\title{Upper bounds on the minimal length of cubic lattice knots}
\author[K. Hong]{Kyungpyo Hong}
\address{Department of Mathematics, Korea University, Anam-dong, Sungbuk-ku, Seoul 136-701, Korea}
\email{cguyhbjm@korea.ac.kr}
\author[S. No]{Sungjong No}
\address{Department of Mathematics, Korea University, Anam-dong, Sungbuk-ku, Seoul 136-701, Korea}
\email{blueface@korea.ac.kr}
\author[S. Oh]{Seungsang Oh}
\address{Department of Mathematics, Korea University, Anam-dong, Sungbuk-ku, Seoul 136-701, Korea}
\email{seungsang@korea.ac.kr}

\thanks{PACS numbers: 02.10.Kn, 82.35.Pq, 02.40.Sf}
\thanks{This work was supported by the National Research Foundation of Korea(NRF)
grant funded by the Korea government (MEST) (No.~2009-0074101).}

\begin{abstract}
Knots have been considered to be useful models for simulating molecular chains such as DNA and proteins.
One quantity that we are interested on molecular knots is
the minimum number of monomers necessary to realize a knot.
In this paper we consider every knot in the cubic lattice.
Especially the minimal length of a knot indicates the minimum length
necessary to construct the knot in the cubic lattice.
Diao introduced this term (he used ``minimal edge number" instead) and
proved that the minimal length of the trefoil knot $3_1$ is $24$.
Also the minimal lengths of the knots $4_1$ and $5_1$ are known to be $30$ and $34$, respectively.
In the article we find a general upper bound of the minimal length of a nontrivial knot $K$,
except the trefoil knot, in terms of the minimal crossing number $c(K)$.
The upper bound is $\frac{3}{2}c(K)^2 + 2c(K) + \frac{1}{2}$.
Moreover if $K$ is a non-alternating prime knot, then the upper bound is
$\frac{3}{2}c(K)^2 - 4c(K) + \frac{5}{2}$.
Our work are considerably direct consequences of the results done by the authors in \cite{HNO}.
Furthermore if $K$ is $(n+1,n)$-torus knot,
then the upper bound is $6 c(K) + 2 \sqrt{c(K)+1} +6$.
\end{abstract}

\maketitle

\section{Introduction} \label{sec:intro}

A knot is a closed curve in $3$-space $\mathbb{R}^3$.
Knots are commonly found in molecular chains such as DNA and proteins,
and they have been considered to be useful models for structural analysis of these molecules.
A knot can be embedded in many different ways in $3$-space, smooth or piecewise linear.
Polygonal knots are those which consist of finite line segments, called {\em sticks\/},
attached end-to-end.
This representation of knots is very useful for many applications in Science.
The microscopic level molecules are more similar to rigid sticks than a flexible rope.
In fact the DNA strand is made up of small rigid sticks of sugar,
phosphorus, nucleotide proteins, and hydrogen bonds.
Chemists are also interested in knotted molecules which are formed by a sequence of atoms
bonded end-to-end so that the last one is also bonded to the first.

A question that naturally arises from these studies is the following.
What is the smallest number of atoms needed to construct a nontrivial knotted molecule?
This question has been addressed by a theoretical study of polygonal knots in lattices \cite{D1}.
Here we aim to determine the minimum length needed to construct a knot in the simple cubic lattice.
A {\em lattice knot\/} is a polygonal knot in the cubic lattice
$\mathbb{Z}^3=(\mathbb{R} \times \mathbb{Z} \times \mathbb{Z}) \cup (\mathbb{Z}
\times \mathbb{R} \times \mathbb{Z}) \cup (\mathbb{Z} \times \mathbb{Z} \times \mathbb{R})$.
For further studies on lattice knots the readers are referred to \cite{G, H1, H2, HO1, HO2, HW, K, P, SW}.
An {\em edge\/} is a line segment of unit length joining two nearby lattice points in $\mathbb{Z}^3$.
Obviously a stick with length $n$ of a lattice knot consists of $n$ edges.

The minimum number of edges necessary to realize a knot as a lattice knot
is called the {\em minimal length\/}.
The minimal length was numerically estimated for various knots \cite{HNRAV, JaP, SIADSV}.
Furthermore such estimation was mathematically confirmed for few small knots.
Diao proved rigorously that the minimal length of any nontrivial lattice knot is at least $24$ and
only the trefoil knot $3_1$ can be realized with $24$ edges \cite{D1}.
Later it was proved that the minimal length of $4_1$ and $5_1$ are $30$ and $34$,
respectively \cite{SIADSV}.
Lattice knots with the minimal length of the knots $3_1$, $4_1$,
and $5_1$ are depicted in Figure \ref{fig:1}.

\begin{figure}[h]
\begin{center}
\includegraphics[scale=1.3]{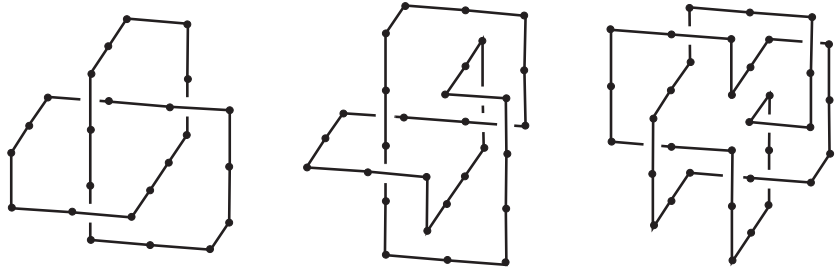}
\end{center}
\caption{$3_1$, $4_1$, and $5_1$ in the cubic lattice}
\label{fig:1}
\end{figure}

In the article we find a general upper bound of the minimal length of a nontrivial
lattice knot $K$ in terms of the minimal crossing number $c(K)$.
The following theorem is the main result of this article.

\begin{theorem} \label{thm:main}
Let $K$ be a nontrivial knot which is not the trefoil knot $3_1$.
Then the minimal length of $K$ is at most $\frac{3}{2}c(K)^2 + 2c(K) + \frac{1}{2}$.
Moreover if $K$ is a non-alternating prime knot,
then the minimal length is at most $\frac{3}{2}c(K)^2 - 4c(K) + \frac{5}{2}$.
Furthermore if $K$ is $(n+1,n)$-torus knot,
then the minimal length is at most $6 c(K) + 2 \sqrt{c(K)+1} +6$.
\end{theorem}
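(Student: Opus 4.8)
The plan is to treat this final clause separately from the first two bounds, since it rests on the special fibered structure of $T(n+1,n)$ rather than on the generic diagrammatic construction underlying Theorem~\ref{thm:main}. I would first rewrite the target in terms of $n$. By the classical formula of Murasugi, the crossing number of the torus knot $T(p,q)$ with $p>q\ge 2$ equals $\min\{p(q-1),\,q(p-1)\}$; for $p=n+1$ and $q=n$ this gives $\min\{(n+1)(n-1),\,n^{2}\}=n^{2}-1$, so $c(K)=n^{2}-1$ and hence $\sqrt{c(K)+1}=n$. Substituting,
\[
6c(K)+2\sqrt{c(K)+1}+6=6(n^{2}-1)+2n+6=6n^{2}+2n,
\]
so it suffices to exhibit a lattice embedding of $T(n+1,n)$ (for $n\ge 3$, the case $n=2$ being the excluded trefoil) using at most $6n^{2}+2n$ edges. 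It is worth stressing that the general bound $\tfrac32 c(K)^{2}+2c(K)+\tfrac12$ is of order $n^{4}$ here, so this third estimate is vastly stronger and cannot descend from the generic construction; the gain must come from packing the torus knot efficiently along its coiled structure.

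For the construction I would realize $T(n+1,n)$ as a closed braid and lay that braid into $\mathbb{Z}^{3}$, using the lattice-realization technique of \cite{HNO} but specialized to the torus case. I would take the minimal-crossing positive braid $(\sigma_{1}\sigma_{2}\cdots\sigma_{n-1})^{n+1}$ on $n$ strands, whose closure is exactly $T(n+1,n)$ and which carries precisely $c(K)=n^{2}-1$ crossings. The strands run in the $y$-direction through $n+1$ ``rows''; each factor $\sigma_{1}\cdots\sigma_{n-1}$ is performed in a single unit of height as one cyclic shift of the $n$ strand positions, with the travelling strand lifted by a unit $z$-edge so as to pass over the others, and the braid is then closed by arcs that leave the braid box (either in $+z$ or beyond the last column) and return to the bottom. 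I would then sort the edges into three structural groups: the straight $y$-runs of the strands, the $x$-jogs together with the $z$-lifts that resolve the crossings, and the closure arcs.

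The accounting should then show each group contributing on the order of $n^{2}$, the routing overhead being absorbed so that the grand total is at most $6n^{2}+2n$: roughly $n(n+1)$ edges of straight strand, about $2n^{2}$ edges realizing the shifts and lifts, and about $n^{2}$ edges for the closure, with the additive $2n$ covering the final reconnection. The main obstacle is the usual tension in lattice realizations, namely verifying two requirements at once. The knot type is the cleaner half: reading the braid word off the coil confirms that each row contributes one factor $\sigma_{1}\cdots\sigma_{n-1}$, that the monodromy is the full $n$-cycle since $\gcd(n+1,n)=1$, and hence that the closure is a single component of type $T(n+1,n)$ rather than a torus link or a reduced knot. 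The delicate half is routing every crossing jog, every $z$-lift, and in particular the $n$ closure arcs so that they remain pairwise disjoint and disjoint from the braid box, \emph{and} doing so within the stated edge budget; since cheaper routing of the crossings and closure is exactly what separates a total of $6n^{2}+2n$ from a larger one, this is where the careful bookkeeping must be carried out.
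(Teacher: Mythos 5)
Your proposal has a genuine gap, and it starts from a mistaken premise about where the torus-knot bound comes from. You assert that the bound $6c(K)+2\sqrt{c(K)+1}+6$ ``cannot descend from the generic construction,'' but in the paper it does exactly that. The paper's construction is quadratic in the \emph{arc index} $a(K)$, not in $c(K)$: Lemma~\ref{lem:-2} realizes any $K$ as a properly leveled lattice knot with $a(K)-1$ $x$-sticks, $a(K)-1$ $y$-sticks and $a(K)$ $z$-sticks, and Lemma~\ref{lem:leveled} then bounds the edge count by roughly $\tfrac{3}{2}a(K)^2$. For general knots one feeds in $a(K)\le c(K)+2$ and gets the $O(c^2)$ bound; for the $(n+1,n)$-torus knot one instead feeds in the known value $a(K)=2n+1=2\sqrt{c(K)+1}+1$, and the very same construction yields $\frac{(2n)^2}{2}+\frac{(2n)^2}{2}+\frac{(2n+1)^2-1}{2}=6n^2+2n=6c(K)+2\sqrt{c(K)+1}+6$. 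No special packing of the coiled braid structure is needed; the entire gain is in the input $a(K)\sim 2\sqrt{c(K)}$ versus $a(K)\sim c(K)$.

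The concrete gap in your alternative route is that the decisive quantitative step is never carried out. Your reduction to ``exhibit an embedding of $T(n+1,n)$ with at most $6n^2+2n$ edges'' is correct (and your computation $c(K)=n^2-1$, $\sqrt{c(K)+1}=n$ is right), but the edge count for the braid-box construction is given only as ``roughly $n(n+1)$,'' ``about $2n^2$,'' and ``about $n^2$,'' and you yourself flag that the disjoint routing of the crossing jogs, the $z$-lifts, and the $n$ closure arcs within that budget ``is where the careful bookkeeping must be carried out.'' Since the whole content of the clause is the exact coefficient $6$ and the additive $2n$, a proof that stops before that bookkeeping has not established the bound. Separately, the statement you were asked to prove also contains the first two bounds ($\tfrac32 c^2+2c+\tfrac12$ in general and $\tfrac32 c^2-4c+\tfrac52$ for non-alternating prime knots); your proposal defers these entirely to an unspecified ``generic diagrammatic construction,'' so as written it addresses only one of the three claims. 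In the paper those two bounds come from the same machinery: Lemma~\ref{lem:-4} (which needs the hypothesis that $K$ is not an $(n+1,n)$-torus knot, explaining why that family is split off) combined with Theorem~\ref{thm:ac} and Lemma~\ref{lem:leveled}.
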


\section{Definitions}

In this section, we introduce several definitions and terminology.
For convenience the notations concerning the $y$ and $z$-coordinates will be defined
in the same manner as the $x$-coordinate.
A stick in $\mathbb{Z}^3$ parallel to the $x$-axis is called an $x$-{\em stick\/}
and an edge parallel to the $x$-axis is called an $x$-{\em edge\/}.
We denote an $x$-stick
$(x_{ii'},j,k) = \{(x,y,z) \in \mathbb{Z}^3 \, | \, i\leq x \leq i', \, y=j, \, z=k \}$
for some integers $i,i',j$ and $k$.
The plane with the equation $x=i$ for some integer $i$ is called an $x$-{\em level} $i$.
So each $y$-stick or $z$-stick lies on an $x$-level $i$ for some $i$.
Note that the $x$-stick $(x_{ii'},j,k)$ whose endpoints lie on $x$-levels $i$
and $i'$ has length $|i'-i|$, and so consists of $|i'-i|$ $x$-edges.
A lattice knot is said to be {\em properly leveled\/} if each $x$-level
({\em resp.} $y$, $z$-level) contains exactly two endpoints of $x$-sticks ({\em resp.} $y$, $z$-sticks).
If a properly leveled lattice knot has $n$ $x$-levels,
then we may say that these are $x$-levels $1,2,\cdots,n$ like heights without changing the knot type.

\begin{lemma} \label{lem:leveled}
If a properly leveled lattice knot has $n$ $x$-levels, then this lattice knot contains at most
$\frac{n^2-1}{2}$ $x$-edges if $n$ is odd, or $\frac{n^2}{2}$ $x$-edges if $n$ is even.
\end{lemma}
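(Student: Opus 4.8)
The plan is to count $x$-edges gap by gap, reducing the problem to a clean combinatorial estimate. I would first record a consequence of the properly leveled hypothesis: with $n$ $x$-levels, each holding exactly two stick-endpoints, there are $2n$ endpoints in all, and since every $x$-stick contributes two endpoints lying on two \emph{distinct} levels, the knot has exactly $n$ $x$-sticks.

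For $m = 1, \dots, n-1$, let $g_m$ denote the gap between $x$-levels $m$ and $m+1$. An $x$-stick with endpoints on levels $a < b$ has length $b-a$ and spans precisely the gaps $g_a, \dots, g_{b-1}$, so the number of $x$-edges it contributes equals the number of gaps it spans. Summing over all sticks and exchanging the order of summation (double counting), the total $x$-edge count equals
$$ \sum_{m=1}^{n-1} s_m, $$
where $s_m$ is the number of $x$-sticks spanning $g_m$.

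The key step is to bound each $s_m$. A stick spans $g_m$ exactly when one of its endpoints lies on a level $\le m$ and the other on a level $\ge m+1$. The levels $1,\dots,m$ carry $2m$ endpoints while the levels $m+1,\dots,n$ carry $2(n-m)$ endpoints, and a spanning stick consumes one endpoint from each side, so $s_m \le \min(2m,\,2(n-m)) = 2\min(m, n-m)$. Substituting yields the bound $\sum_{m=1}^{n-1} 2\min(m, n-m)$ on the number of $x$-edges; note that because we only seek an upper bound, it is irrelevant whether these per-gap maxima can be attained simultaneously.

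Finally I would evaluate the sum. The sequence $\min(m, n-m)$ for $m = 1,\dots,n-1$ rises to a central peak and descends symmetrically, and summing the two triangular halves gives $\tfrac{n^2-1}{2}$ when $n$ is odd and $\tfrac{n^2}{2}$ when $n$ is even, matching the claim. The conceptual content is entirely in the per-gap inequality $s_m \le 2\min(m, n-m)$, which is a direct reading of the properly leveled hypothesis; the only mild obstacle is keeping the two parity cases of the closing arithmetic straight.
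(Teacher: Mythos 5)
Your proposal is correct and follows essentially the same route as the paper: both count $x$-edges gap by gap, bounding the number of sticks spanning the gap between levels $m$ and $m+1$ by $2\min(m,n-m)$ via the endpoint budget on each side, and then sum the resulting triangular sequence. Your write-up merely makes the paper's informal "add two more edges as you approach the middle level" step explicit as the inequality $s_m \le 2\min(m,n-m)$, and the closing arithmetic in both parity cases checks out.
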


\begin{proof}
By proper leveledness there are exactly two $x$-edges between $x$-level $1$ and $2$
(similarly for $n-1$ and $n$).
And there are at most four $x$-edges between $x$-level $2$ and $3$
(similarly for $n-2$ and $n-1$).
Keep going to add two $x$-edges when $x$-levels approach to the middle level.
Then the maximum number of $x$-edges is
$2 \times \sum^{\frac{n-1}{2}}_{i=1} 2i = \frac{n^2-1}{2}$ if $n$ is odd, or
$2 \sum^{\frac{n}{2}-1}_{i=1} 2i + 2 (\frac{n}{2}) = \frac{n^2}{2}$ if $n$ is even.
\end{proof}

There is an open-book decomposition of $\mathbb{R}^3$ which has open half-planes as pages
and the standard $z$-axis as the binding axis.
We may regard each page as a half-plane $H_{\theta}$ at angle $\theta$
when the $x$-$y$ plane has a polar coordinate.
It can be easily shown that every knot $K$ can be embedded in an open-book
with finitely many pages so that it meets each page in a simple arc.
Such an embedding is called an {\em arc presentation\/} of $K$.
And the {\em arc index\/} $a(K)$ is defined to be the minimal number of pages
among all possible arc presentations of $K$.
For example, the left figure in Figure \ref{fig2} shows an arc presentation of figure-$8$ knot
which has the arc index $6$.
Here the points of $K$ on the binding axis are called {\em binding indices\/},
assigned by $1,2, \cdots , a(K)$ from bottom to top.
Also we assign the page numbers $1,2, \cdots , a(K)$
to all of the arcs from the back page to the front.
For further studies on arc presentation the readers are referred to \cite{C}.

\begin{figure} [h]
\begin{center}
\includegraphics[scale=1.2]{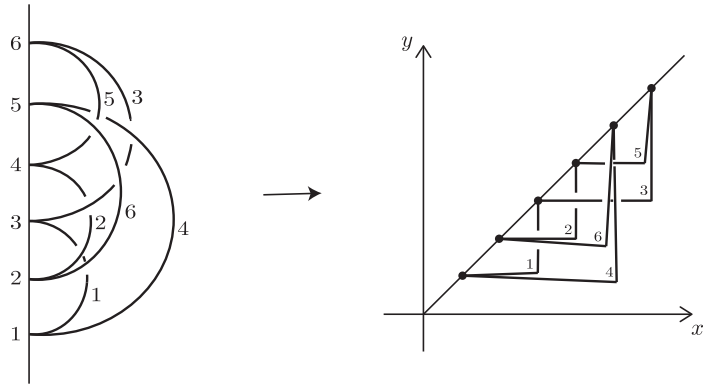}
\end{center}
\caption{Arc presentation and lattice arc presentation} \label{fig2}
\end{figure}

In this paper we find an upper bound of the minimal length in terms of arc index.
The following theorem has key role to convert it to an upper bound in terms of crossing number.
Bae and Park established an upper bound on arc index in terms of crossing number.
In their paper \cite{BP}, Corollary $4$ and Theorem $9$ provide that $a(K) \leq c(K)+2$,
and moreover $a(K) \leq c(K)+1$ if $K$ is a non-alternating prime knot.
Later Jin and Park improved the second part of Bae and Park's theorem.
Theorem $3.3$ in \cite{JiP} provides that if $K$ is a non-alternating prime knot, then $a(K) \leq c(K)$.

\begin{theorem} \label{thm:ac}
Let $K$ be any nontrivial knot. Then $a(K) \leq c(K)+2$.
Moreover if $K$ is a non-alternating prime knot, then $a(K) \leq c(K)$.
\end{theorem}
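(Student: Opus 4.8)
The plan is to read off both inequalities directly from the two results quoted just before the statement, so that the proof amounts to assembling known facts rather than producing a fresh argument. The first inequality, $a(K) \leq c(K)+2$ for every nontrivial knot, is precisely Corollary 4 and Theorem 9 of Bae and Park \cite{BP}. The second, the sharper bound $a(K) \leq c(K)$ for a non-alternating prime knot, is Theorem 3.3 of Jin and Park \cite{JiP}, which improves the intermediate estimate $a(K) \leq c(K)+1$ also found in \cite{BP}. Since each is stated in the literature in exactly the form required, the proof is completed by citing them and noting that the non-alternating hypothesis selects the stronger bound.

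To indicate why the general inequality holds, I would recall the equivalence between an arc presentation on $p$ pages and a grid (rectangular) diagram on a $p \times p$ board, so that $a(K)$ equals the minimal grid number of $K$. Beginning from a reduced diagram $D$ that realizes $c(K)$, one isotopes $D$ into rectangular form, arranging (roughly) that each horizontal segment occupies one binding index and each vertical segment one page. Counting the segments then shows that a $c$-crossing diagram can be turned into a grid diagram whose grid number exceeds $c$ by only a bounded amount, and a careful layout makes this excess at most $2$, which is the content of the Bae--Park bound.

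The substantive part, and the step I would expect to be the main obstacle if one tried to prove everything from first principles, is the reduction of the additive constant from $1$ to $0$ in the non-alternating prime case. This refinement of Jin and Park rests on a delicate combinatorial analysis of non-alternating reduced diagrams, isolating structure that lets the arc presentation be built with one fewer page than the general construction allows, while verifying that this local saving is compatible with the global presentation. Establishing that such a saving is always available is the hard technical heart of the matter; since it is carried out in \cite{JiP}, we are entitled to invoke it, and with both \cite{BP} and \cite{JiP} in hand the statement follows at once.
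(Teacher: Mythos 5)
Your proposal matches the paper exactly: the paper also establishes this theorem purely by citation, invoking Corollary 4 and Theorem 9 of \cite{BP} for $a(K) \leq c(K)+2$ and Theorem 3.3 of \cite{JiP} for the non-alternating prime case. Your additional sketch of why the Bae--Park bound holds is inessential (and not part of the paper's argument), but the citations you rely on are the same ones the paper uses.
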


We move the binding axis to the line $y=x$ on the $x$-$y$ plane and
replace each arc by two connected sticks
which are an $x$-stick and a $y$-stick properly below the line $y=x$.
For better view we slightly perturb each pair of sticks which are overlapped.
The resulting is called a {\em lattice arc presentation\/} of the knot on the plane.
See the right figure in Figure \ref{fig2}.
This lattice arc presentations of knots are very useful to construct lattice knots in $\mathbb{Z}^3$.

\section{Proof of Theorem \ref{thm:main}}

The key idea of the construction of a proper lattice knot is given by the authors in \cite{HNO}.
The following two lemmas are almost direct consequences of Lemma $4$, $5$, and $6$ in the paper.
For completeness we recall the proofs of the lemmas.

\begin{lemma} \label{lem:-2}
Let $K$ be a nontrivial knot with the arc index $a(K)$.
Then we can realize $K$ as a properly leveled lattice knot
by using $a(K)-1$ $x$-sticks, $a(K)-1$ $y$-sticks, and $a(K)$ $z$-sticks.
\end{lemma}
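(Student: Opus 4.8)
The plan is to realize $K$ by ``thickening'' a minimal arc presentation into the cubic lattice, following the construction of \cite{HNO}. Fix an arc presentation of $K$ with $a(K)$ pages, binding indices $1,\dots,a(K)$ from bottom to top, and pages $1,\dots,a(K)$ from back to front. First I would pass to the lattice arc presentation in the plane, placing binding index $i$ over the point $(i,i)$ on the line $y=x$, and then give each page its own horizontal level: the arc on page $p$ joining binding indices $a<b$ is drawn at height $z=p$ as the $x$-stick from $(a,a,p)$ to $(b,a,p)$ followed by the $y$-stick from $(b,a,p)$ to $(b,b,p)$, while each binding index $i$ becomes a single vertical $z$-stick over $(i,i)$ joining the two page-heights of the arcs incident to $i$.

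I would next check that this already yields an embedded lattice knot of the correct type, the relative page-heights reproducing the over/under data of the arc presentation. In this naive form there is one $x$-stick and one $y$-stick per arc and one $z$-stick per binding index, so $a(K)$ sticks in each direction, and proper leveledness is transparent: the two arcs incident to binding index $i$ contribute exactly two $x$-stick endpoints to $x$-level $i$ and two $y$-stick endpoints to $y$-level $i$, while the two ends of the arc on page $p$ contribute exactly two $z$-stick endpoints to $z$-level $p$. This already establishes the $z$-count of $a(K)$.

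The crux is then to shave one $x$-stick and one $y$-stick without destroying proper leveledness or the knot type. The key observation is that both arcs incident to the \emph{bottom} binding index $1$ leave $(1,1)$ along $x$-sticks on $y$-level $1$, hence are collinear in projection; symmetrically, both arcs at the \emph{top} binding index $a(K)$ arrive along $y$-sticks on $x$-level $a(K)$. I would smooth the degree-two binding index $1$, merging its two incident arcs into a single arc that can be drawn with one fewer $x$-stick (delete the shorter of the two collinear $x$-sticks and relocate the corresponding $z$-stick to its far endpoint); this is an isotopy, so the knot type is unchanged, it eliminates one $x$-stick, and it empties the now-unused $x$-level $1$. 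Smoothing the top binding index $a(K)$ symmetrically removes one $y$-stick and empties $y$-level $a(K)$. The delicate part, and where the real work lies, is the bookkeeping: I must verify that the relocated $z$-sticks create no new crossings and keep $z$-leveledness intact, that after each smoothing every surviving $x$-level and $y$-level still carries exactly two endpoints (so that $a(K)-1$ levels remain in each of the $x$- and $y$-directions), and that the two smoothings are compatible, treating separately the degenerate case in which a single arc joins binding indices $1$ and $a(K)$. Once this is checked, the resulting lattice knot is properly leveled and uses exactly $a(K)-1$ $x$-sticks, $a(K)-1$ $y$-sticks, and $a(K)$ $z$-sticks.
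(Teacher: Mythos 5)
Your construction is the same as the paper's: build the lattice arc presentation with one $x$-stick and one $y$-stick per arc at height equal to its page number and one $z$-stick per binding index, then merge the two arcs at binding index $1$ (deleting the shorter $x$-stick, truncating the longer, and relocating the $z$-stick) and symmetrically at binding index $a(K)$ to save one $x$-stick and one $y$-stick. The bookkeeping you flag as delicate is exactly what the paper's verification amounts to, so the proposal is correct and essentially identical to the published proof.
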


\begin{proof}
We begin with a lattice arc presentation of $K$ with $a(K)$ arcs
as the right figure in Figure \ref{fig2}.
To realize each arc which has the page number $k$ and
the binding indices $i$ and $j$ (assume that $i<j$)
at its endpoints for some $i,j,k = 1,2, \cdots, a(K)$,
we build two sticks $(x_{ij},i,k)$ and $(j,y_{ij},k)$ on $\mathbb{Z}^3$.
And then we connect each pair of arcs which share a binding index, say $i$,
by a $z$-stick $(i,i,z_{kk'})$ where $k$ and $k'$ are the page numbers of the pair.
The resulting lattice knot is properly leveled and
consists of $a(K)$ $x$-sticks, $a(K)$ $y$-sticks, and $a(K)$ $z$-sticks.
See the left figure in Figure \ref{fig3}.

Still we have a chance to reduce $2$ more sticks as follows.
Consider the two $x$-sticks and the $z$-stick lying on $y$-level $1$.
These two $x$-sticks are $(x_{1i},1,k)$ and $(x_{1i'},1,k')$ (assume that $i < i'$ and $k < k'$),
so this $z$-stick is $(1,1,z_{kk'})$.
Delete the shorter $x$-stick $(x_{1i},1,k)$, and replace the other two sticks
by an $x$-stick $(x_{ii'},1,k')$ and a $z$-stick $(i,1,z_{kk'})$.
We repeat this replacement for the two $y$-sticks and the $z$-stick lying on $x$-level $a(K)$.
The final lattice knot is also properly leveled and
consists of $a(K)-1$ $x$-sticks, $a(K)-1$ $y$-sticks, and $a(K)$ $z$-sticks
as the right figure in Figure \ref{fig3}.
\end{proof}

\begin{figure} [h]
\begin{center}
\includegraphics[scale=1.2]{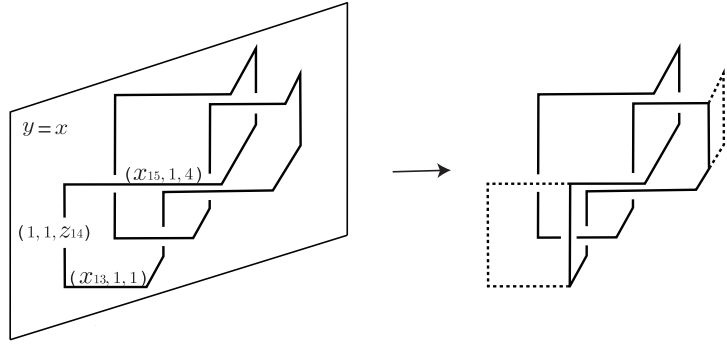}
\end{center}
\caption{Lattice presentation with $2$ sticks reduced} \label{fig3}
\end{figure}

\begin{lemma} \label{lem:-4}
Let $K$ be a nontrivial knot with the arc index $a(K)$.
Suppose that $K$ is not $(n+1,n)$-torus knot.
Then we can realize $K$ as a properly leveled lattice knot
by using $a(K)-2$ $x$-sticks, $a(K)-1$ $y$-sticks, and $a(K)-1$ $z$-sticks.
\end{lemma}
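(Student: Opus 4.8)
The plan is to start from the properly leveled lattice knot produced in the proof of Lemma \ref{lem:-2}, which uses $a(K)-1$ $x$-sticks, $a(K)-1$ $y$-sticks, and $a(K)$ $z$-sticks, and to perform one further shortcut that deletes a single $x$-stick together with a single $z$-stick. Since the $z$-sticks are created one per binding index and the two reductions of Lemma \ref{lem:-2} left their number unchanged at $a(K)$, reaching the target counts $a(K)-2$, $a(K)-1$, $a(K)-1$ amounts to eliminating one more $x$-stick and, for the first time, an entire $z$-stick while keeping the $y$-sticks fixed. The reductions in Lemma \ref{lem:-2} exploited the two extremal binding indices $1$ and $a(K)$, at which both incident arcs point the same way; the extra saving must therefore come from a third, non-extremal, local configuration, and this is exactly where the hypothesis on $K$ enters.

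First I would analyze the corner that the reduction of Lemma \ref{lem:-2} leaves behind at binding index $1$. After deleting the shorter $x$-stick $(x_{1i},1,k)$ and rerouting, the surviving arc $k$ contributes only its $y$-stick $(i,y_{1i},k)$, which sits on $x$-level $i$ and is flanked by the rerouted $z$-stick $(i,1,z_{kk'})$ at its foot $(i,1,k)$ and by the $z$-stick at binding index $i$ at its head $(i,i,k)$. My aim is to choose the arc presentation realizing $a(K)$ so that these two flanking $z$-sticks leave $x$-level $i$ on the same side, that is, both increasing or both decreasing in the $z$-coordinate; when they do, the short arc can be absorbed by a shortcut entirely analogous to the first reduction, collapsing its surviving stick and merging the two $z$-sticks into one. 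This single move removes one $x$-stick and one $z$-stick at once. Checking that the result is again a properly leveled lattice knot of the same type, with the $y$-stick count unchanged, is routine and follows the verifications already made in Lemma \ref{lem:-2}.

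The hard part is guaranteeing that such a reducible corner can always be arranged when $K$ is not an $(n+1,n)$-torus knot. I would argue this at the level of the arc presentation: the obstruction to the extra shortcut is that every admissible local configuration is forced into a maximally regular, spirally symmetric ``staircase'' pattern, in which the two flanking $z$-sticks at each interior binding index necessarily point to opposite sides, so that no arc can be absorbed. The content of the lemma is then the combinatorial fact that this rigid staircase is realized only by the arc presentations attaining the arc index of the $(n+1,n)$-torus knots, while for every other nontrivial knot one can select, among the presentations realizing $a(K)$, one that contains the required reducible corner. Establishing this characterization, equivalently showing that the \emph{absence} of any reducible corner forces $K$ to be an $(n+1,n)$-torus knot, is the main obstacle; it is precisely the point at which the constructions underlying Lemma $6$ of \cite{HNO} are to be invoked.
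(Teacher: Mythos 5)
Your overall strategy---find one more reducible local configuration and use the non-torus hypothesis, via \cite{HNO}, to guarantee it exists---points in the right direction, but both halves have genuine gaps. First, the local move does not produce the claimed savings. After the reduction of Lemma \ref{lem:-2} at binding index $1$, the arc with page number $k$ retains only its $y$-stick $(i,y_{1i},k)$; ``absorbing'' it therefore cannot delete an $x$-stick, since that arc no longer owns one. Moreover the two $z$-sticks flanking this $y$-stick sit over the distinct points $(i,1)$ and $(i,i)$ of the $x$-$y$ plane, so they cannot ``merge into one'' $z$-stick. Even when they leave $z$-level $k$ on the same side, the analogue of the Lemma \ref{lem:-2} shortcut only deletes the shorter $z$-stick and re-levels the $y$-stick, saving a single stick and yielding the counts $a(K)-1$, $a(K)-1$, $a(K)-1$ rather than the required $a(K)-2$, $a(K)-1$, $a(K)-1$. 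Second, the combinatorial claim you yourself flag as the main obstacle---that the absence of any reducible corner forces $K$ to be an $(n+1,n)$-torus knot---is nowhere established, and it is not what Lemma $6$ of \cite{HNO} supplies. What Lemmas $5$ and $6$ there give is that a non-torus $K$ admits, possibly after passing to the dual presentation, a non-star-shaped arc presentation, which produces an arc $l$ on page $1$ with binding indices $a,b$ satisfying $1<a<b<a(K)$, adjacent at $b$ to an arc $l'$ running from $b$ to binding index $a(K)$.

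The paper's mechanism is different from yours and is what actually wins two sticks at once: because $l$ lies on the back page, it can be rotated $180^{\circ}$ about the binding axis to the other side of the line $y=x$ without changing the knot, and it is untouched by the two reductions of Lemma \ref{lem:-2} since it avoids binding indices $1$ and $a(K)$. Lifting $l$ to the $z$-level of $l'$'s page makes its $x$-stick collinear with the $x$-stick of $l'$ (they meet binding index $b$ from opposite sides), so the two $x$-sticks merge into one and the $z$-stick at binding index $b$ becomes unnecessary---one $x$-stick and one $z$-stick removed simultaneously. No same-side corner analysis is involved. To repair your argument you would need both a move of this two-stick-saving type and a proof that the configuration it requires exists for every non-torus knot; the latter is precisely the structural input from \cite{HNO}, and it concerns star-shapedness of arc presentations rather than your staircase condition.
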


\begin{proof}
The proofs of Lemmas $5$ and $6$ in \cite{HNO} guarantee that
if $K$ is not $(n+1,n)$-torus knot, then we have a special lattice arc presentation of $K$
so that an arc $l$ with the page number $1$ has the binding indices $a$ and $b$,
and another arc $l'$ has the binding indices $b$ and $a(K)$ satisfying $1<a<b<a(K)$.
In more details, this special lattice arc presentation can be obtained from, so called,
a non-star shaped arc presentation of $K$ as the first paragraph in the proof of
Lemma $5$ in \cite{HNO}.
Note that if $K$ has a star shaped arc presentation and is not $(n+1,n)$-torus knot,
then its dual arc presentation, so called, converts into a non-star shaped arc presentation
as the last paragraph in the proof of Lemma $6$ in \cite{HNO}.

Let $k$ be the page number of the arc $l'$.
See the figures in Figure \ref{fig4} for better understanding.
We slightly change the related lattice arc presentation so that only
the arc $l$ lies above the line $y=x$.
This does not change the knot $K$ because $l$ does not touch the other part of $K$
when rotating it $180^{\circ}$ around the binding axis.
Now repeat the whole part of the proof of Lemma \ref{lem:-2}.
Notice that the arc $l$ is not involved in the reduction of two sticks
on $y$-level $1$ and $x$-level $a(K)$
since $l$ does not have the binding indices $1$ and $a(K)$.

Still we can lift $l$ up through the $z$-axis till it reaches the $z$-level $k$
with replacing $z$-sticks properly.
Indeed the new $x$-stick of $l$ lies on the same line containing the $x$-stick of the arc $l'$.
This means that we can delete an $x$-stick and an $z$-stick.
The resulting lattice knot is obviously properly leveled and
consists of $a(K)-2$ $x$-sticks, $a(K)-1$ $y$-sticks, and $a(K)-1$ $z$-sticks.
\end{proof}

\begin{figure} [h]
\begin{center}
\includegraphics[scale=1.1]{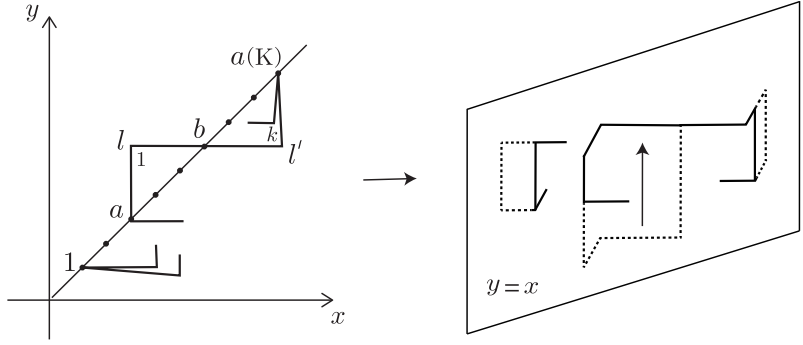}
\end{center}
\caption{Lattice presentation with $4$ sticks reduced} \label{fig4}
\end{figure}

Now we are ready to prove the main theorem.

\begin{proof}[Proof of Theorem \ref{thm:main}]
Let $K$ be a nontrivial knot which is not the trefoil knot.
Suppose that $K$ is not $(n+1,n)$-torus knot.
By Lemma \ref{lem:-4} and Theorem \ref{thm:ac},
$K$ can be realized as a properly leveled lattice knot
by using $c(K)$ $x$-sticks, $c(K)+1$ $y$-sticks, and $c(K)+1$ $z$-sticks.
By using Lemma \ref{lem:leveled},
this lattice knot $K$ consists of at most
$\frac{c(K)^2-1}{2} + \frac{(c(K)+1)^2}{2} + \frac{(c(K)+1)^2}{2} = \frac{3c(K)^2+4c(K)+1}{2}$ edges
if $c(K)$ is odd,
or $\frac{c(K)^2}{2} + \frac{(c(K)+1)^2-1}{2} + \frac{(c(K)+1)^2-1}{2} = \frac{3c(K)^2+4c(K)}{2}$ edges
if $c(K)$ is even.
These equations imply that the lattice knot $K$ consists of at most
$\frac{3}{2}c(K)^2 + 2c(K) + \frac{1}{2}$ edges.

Moreover if $K$ is a non-alternating prime knot,
then $c(K)-2$ $x$-sticks, $c(K)-1$ $y$-sticks, and $c(K)-1$ $z$-sticks are sufficient.
The same calculations show that the lattice knot $K$ consists of
$\frac{3}{2}c(K)^2 - 4c(K) + \frac{5}{2}$ edges.

Suppose that $K$ is $(n+1,n)$-torus knot for $n \geq 3$.
These are well-known facts that $c(K)=(n+1)(n-1)=n^2-1$ and $a(K)=(n+1)+n=2n+1$.
So $a(K) = 2 \sqrt{c(K)+1} +1$.
By Lemma \ref{lem:-2}, $K$ can be realized as a properly leveled lattice knot
by using $2 \sqrt{c(K)+1}$ $x$-sticks , $2 \sqrt{c(K)+1}$ $y$-sticks, and $2 \sqrt{c(K)+1} +1$ $z$-sticks.
By using Lemma \ref{lem:leveled} again,
this lattice knot $K$ consists of at most
$\frac{(2 \sqrt{c(K)+1})^2}{2} + \frac{(2 \sqrt{c(K)+1})^2}{2} + \frac{(2 \sqrt{c(K)+1}+1)^2 -1}{2}
= 6 c(K) + 2 \sqrt{c(K)+1} +6$ edges.
\end{proof}

\section{Conclusion}

Knots are commonly found in molecular chains such as DNA and proteins,
and they have been considered to be useful models for structural analysis of these molecules.
In the laboratory, the microscopic level molecules are more similar to rigid sticks
than a flexible rope.
In this context, we will consider a knot which consist of finite line segments
in the cubic lattice.
It is natural to ask what is the smallest number of atoms needed to construct
a nontrivial knotted molecule?
We propose upper bounds on the minimal length of any cubic lattice knots.
For small knots $3_1$, $4_1$ and $5_1$, their minimal lengths
are already known as 24, 30 and 34, respectively \cite{D1, SIADSV}.
Also the minimal length was numerically estimated for various knots \cite{HNRAV, JaP, SIADSV}.
In these papers the upper bounds of the minimal lengths of knots with crossing numbers
4, 5, 6, 7 and 8 are 30, 36, 40, 46 and 52,
and our upper bound formula gives the upper bounds as 32, 48, 66, 88 and 112, respectively.
These upper bounds are rather loose even for small knots,
and will be very loose for knots with larger number of crossings.
However, we need to point out that our upper bound formula is obtained by
purely analytic calculations instead of numerical calculations.

Finally we better mention the term ``minimum ropelength"
which is closely related to the minimal length.
The ropelength of a knot is defined to be the quotient of its length by its thickness,
where thickness is the radius of the largest embedded tubular neighborhood around the knot.
Indeed the minimum ropelength of a knot is less than or equal to two times of the
minimal length of the knot.
In \cite{CFM}, they found an upper bound of the minimum ropelength of a non-split link $L$
which is $1.64 c(L)^2 + 7.69 c(L) + 6.74$.
Also in \cite{DEY}, another upper bound $272 c(L)^{\frac{3}{2}} + 168 C(L)
+ 44 c(L)^{\frac{1}{2}} +22$ is proposed.

\end{document}